\newtheorem{thm}{Theorem}[section]
\newtheorem{prop}[thm]{Proposition}
\newtheorem{lem}[thm]{Lemma}
\newtheorem{cor}[thm]{Corollary}
\theoremstyle{definition}
\newtheorem{defn}[thm]{Definition}
\theoremstyle{remark}
\newtheorem{remk}[thm]{Remark}
\newtheorem{remks}[thm]{Remarks}
\newtheorem{exm}[thm]{Example}
\newtheorem{exms}[thm]{Examples}
\newtheorem{notat}[thm]{Notation}
\numberwithin{equation}{section}
\newcommand{\thmref}{Theorem~\ref}
\newcommand{\propref}{Proposition~\ref}
\newcommand{\corref}{Corollary~\ref}
\newcommand{\lemref}{Lemma~\ref}
\newcommand{\exref}{Example~\ref}
\newcommand{\sE}{{\mathcal E}}
\newcommand{\sF}{{\mathcal F}}
\newcommand{\sG}{{\mathcal G}}
\newcommand{\sI}{{\mathcal I}}
\newcommand{\sO}{{\mathcal O}}
\renewcommand{\P}{{\mathbb P}}
\newcommand{\codim}{{\rm codim}}
\newcommand{\Pic}{{\rm Pic}}
\newcommand{\Hom}{{\rm Hom}}
\newcommand{\sHom}{{\mathcal{H}{om}}}
\newcommand{\Lef}{{\rm L}}
\newcommand{\eLef}{{\rm eL}}
\newcommand{\ds}{{/\kern-3pt/}}
\renewcommand{\dim}{\text{\rm dim}}
\newcommand{\tuborg}{\left\{\begin{array}{ll}}
\newcommand{\sluttuborg}{\end{array}\right.}
\newcommand{\isoto}{\stackrel{\simeq}{\to}}
\renewcommand{\hat}{\widehat}
\renewcommand{\tilde}{\widetilde}
\begin{document}
\title[Equivariant Picard groups]
{A Grothendieck-Lefschetz theorem for Equivariant Picard groups}

\author{Charanya Ravi}
\address{Department of Mathematics, University of Oslo, P.O. Box 1053 Blindern, 
N-0316 Oslo, Norway}
\email{charanyr@math.uio.no}

\keywords{Group scheme actions, Grothendieck-Lefschetz theorem, equivariant Picard group.}

\subjclass[2010]{Primary 14C22, 14L30; Secondary 32S50}

\begin{abstract}
We prove a Grothendieck-Lefschetz theorem
for equivariant Picard groups of non-singular varieties with finite group actions.
\end{abstract}

\maketitle

\section{Introduction}
The geometry and $K$-theory of schemes with group scheme
actions have been extensively studied by various authors in recent years (e.g., see 
\cite{HVO15}, \cite{KR16}, \cite{H17}). 
The generalization of some of the fundamental theorems of algebraic geometry
to the equivariant setting has played an important role in the development of this subject.
The classical Lefschetz-type theorems compare the various algebraic invariants of 
non-singular projective varieties and their hyperplane sections. 
Let $X$ be a non-singular projective variety over a field $k$ of characteristic zero and
let $Y$ be a non-singular subvariety of $X$, of dimension $\geq 3$, which
is a scheme-theoretic complete intersection in $X$.
The Grothendieck-Lefschetz theorem for Picard groups 
(see \cite[Th\'eorem\`e XI.3.1]{SGA2}, \cite[Corollary IV.3.3]{Har70}) 
states that the Picard groups of $X$ and $Y$ are isomorphic.
The purpose of this article is to prove an analogous result for
varieties with finite group actions.

For a variety $X$ with $G$-action,
let $\Pic^G(X)$ denote the equivariant Picard group of $X$ (see \cite[1.3, page 32]{GIT}).
Our main result is the following.
\begin{thm} \label{cor:char0}
Let $k$ be a field of characteristic zero and let $G$ be a finite group.
Let $X$ be a non-singular projective variety over $k$ with $G$-action
and let $Y$ be a non-singular $G$-invariant subvariety of dimension $\geq 3$,
which is a scheme-theoretic complete intersection in $X$. Then the natural map
$
\Pic^G(X) \rightarrow \Pic^G(Y)
$
is an isomorphism.
\end{thm}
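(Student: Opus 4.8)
The plan is to reduce the theorem to the classical Grothendieck--Lefschetz isomorphism $\Pic(X)\xrightarrow{\sim}\Pic(Y)$ by comparing the two equivariant Picard groups through their descriptions in group cohomology. The main tool is the descent spectral sequence for $G$-equivariant line bundles on a $G$-scheme $X$,
\[
E_2^{p,q}=H^p\bigl(G,\,H^q(X,\G_m)\bigr)\ \Longrightarrow\ H^{p+q}_G(X,\G_m),
\]
in which $H^1_G(X,\G_m)=\Pic^G(X)$. Its five-term exact sequence reads
\[
0\to H^1\bigl(G,\mathcal{O}(X)^*\bigr)\to \Pic^G(X)\to \Pic(X)^G\xrightarrow{\ \delta\ } H^2\bigl(G,\mathcal{O}(X)^*\bigr),
\]
where $\mathcal{O}(X)^*=H^0(X,\G_m)$ is the group of invertible global functions and $\Pic(X)^G=H^0(G,\Pic(X))$ is the subgroup of isomorphism classes fixed by $G$. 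First I would record this sequence, together with the analogous one for $Y$, and assemble the commutative ladder induced by restriction of (equivariant) line bundles along the $G$-equivariant closed immersion $Y\hookrightarrow X$.

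Next I would identify the three outer comparison maps. Since $X$ and $Y$ are integral projective varieties over $k$, one has $\mathcal{O}(X)^*=k^*=\mathcal{O}(Y)^*$, and because $G$ acts by $k$-automorphisms it acts trivially on $k^*$; thus the restriction $\mathcal{O}(X)^*\to\mathcal{O}(Y)^*$ is the identity of the $G$-module $k^*$, whence $H^i(G,\mathcal{O}(X)^*)\to H^i(G,\mathcal{O}(Y)^*)$ is an isomorphism for every $i$, in particular for $i=1,2$. (If geometric integrality is not assumed, the same conclusion follows from the classical comparison $H^0(X,\mathcal{O}_X)\xrightarrow{\sim}H^0(Y,\mathcal{O}_Y)$, valid in the present range of dimensions.) For the middle outer term, the classical Grothendieck--Lefschetz theorem gives that $\Pic(X)\to\Pic(Y)$ is an isomorphism; as $Y$ is $G$-invariant and the immersion is $G$-equivariant, this is a morphism of $G$-modules, hence restricts to an isomorphism $\Pic(X)^G\xrightarrow{\sim}\Pic(Y)^G$ on invariants. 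The characteristic-zero hypothesis enters only through the classical theorem.

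Finally I would conclude by a diagram chase in the ladder
\[
\begin{CD}
0 @>>> H^1(G,\mathcal{O}(X)^*) @>>> \Pic^G(X) @>>> \Pic(X)^G @>{\delta}>> H^2(G,\mathcal{O}(X)^*)\\
@. @VV{\cong}V @VVV @VV{\cong}V @VV{\cong}V\\
0 @>>> H^1(G,\mathcal{O}(Y)^*) @>>> \Pic^G(Y) @>>> \Pic(Y)^G @>>> H^2(G,\mathcal{O}(Y)^*)
\end{CD}
\]
The first and third vertical maps are isomorphisms and the fourth is an isomorphism (injectivity alone would suffice), so the four lemma shows the middle map $\Pic^G(X)\to\Pic^G(Y)$ is an isomorphism: concretely, injectivity follows by chasing an element of the kernel into $\Pic(X)^G$ and then into $H^1(G,\mathcal{O}(X)^*)$, while surjectivity follows by lifting a class in $\Pic^G(Y)$ first to $\Pic(X)^G$ (using injectivity of the rightmost map to kill the obstruction in $H^2$) and then correcting it by a suitable element of $H^1(G,\mathcal{O}(X)^*)$.

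The step I expect to be the main obstacle is the homological bookkeeping of the first paragraph: making precise the equivariant cohomology $H^*_G(X,\G_m)$ and the identification $H^1_G(X,\G_m)=\Pic^G(X)$ (the classifying interpretation of equivariant line bundles, e.g.\ via the quotient stack $[X/G]$ or the simplicial bar construction on the $G$-action), together with the naturality of the edge map $\delta$ in $X$ that is needed for the ladder to commute. Once this formalism is in place the geometric content is supplied entirely by the classical Grothendieck--Lefschetz theorem, while the group-cohomology input becomes trivial, since the coefficient module $k^*$ is literally unchanged under restriction.
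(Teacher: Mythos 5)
Your argument is correct, but it takes a genuinely different route from the paper. The paper does not quote the classical Grothendieck--Lefschetz theorem at all: it re-runs the whole Lefschetz machinery equivariantly, passing through the formal completion $\hat{X}$ of $X$ along $Y$, an equivariant version of Grothendieck's conditions $\Lef(X,Y)$ and $\eLef(X,Y)$, the chain $\Pic^G(X)\cong\Pic^G(\hat{X})\cong\varprojlim_n\Pic^G(Y_n)\cong\Pic^G(Y)$, and the truncated exponential sequence $0\to I^n/I^{n+1}\to\sO_{Y_{n+1}}^*\to\sO_{Y_n}^*\to 0$; the characteristic-zero hypothesis enters through Kodaira--Akizuki--Nakano vanishing applied to $H^i(Y,I^n/I^{n+1})$ for $i=1,2$ rather than through the classical theorem. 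You instead black-box the classical isomorphism $\Pic(X)\isoto\Pic(Y)$ and transfer it to $\Pic^G$ via the five-term exact sequence of the descent spectral sequence $H^p(G,H^q(X,\G_m))\Rightarrow H^{p+q}(G;X,\G_m)$ and the four lemma. The formal ingredients you worry about are in fact available: the spectral sequence is Grothendieck's (Tohoku, Th\'eor\`eme 5.2.1, for a finite group acting on a ringed space), and the identification $H^1(G;X,\sO_X^*)\cong\Pic^G(X)$ is exactly \cite[Theorem 2.7]{HVO15}, which the paper itself invokes. The one point you should not wave away is the outer terms: you need $H^0(X,\sO_X)^*\to H^0(Y,\sO_Y)^*$ to be an isomorphism of $G$-modules, which requires knowing $H^0(X,\sO_X)\isoto H^0(Y,\sO_Y)$ (not merely that both equal $k^*$, since over a non-algebraically-closed field these are finite field extensions of $k$ on which $G$ may act nontrivially); this does follow from the classical connectedness/Lefschetz theory for ample complete intersections of positive dimension, as you indicate. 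The trade-off: your proof is shorter and isolates the equivariant content as pure group cohomology, but it only yields Theorem \ref{cor:char0} (and, with \cite[Corollary IV.3.2]{Har70}, Corollary \ref{cor:Pn}); the paper's route produces the more flexible Theorem \ref{thm:Main-thm}, valid in arbitrary characteristic under explicit equivariant cohomological hypotheses, of which the characteristic-zero statement is just one instance.
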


In view of the Kodaira-Akizuki-Nakano vanishing theorem, 
Theorem \ref{cor:char0} is a straightforward consequence of the technical result
Theorem \ref{thm:Main-thm}, 
which is proved by closely following the proof of the
Grothendieck-Lefschetz theorems
given in \cite[Chapter IV]{Har70}.
The main idea is to use the formal completion of $X$ along $Y$ and 
a suitable equivariant generalization of the Lefschetz conditions,
which is discussed in (\ref{sec:Lefconditions}).
As a corollary to Theorem \ref{thm:Main-thm}, 
we also deduce that
if $G$ acts on a projective space $X$ over $k$ (a field of arbitrary characteristic) 
and $Y$ is a $G$-invariant
scheme-theoretic complete intersection in $X$ such that $\dim(Y) \geq 3$, then the
equivariant Picard groups of $X$ and $Y$ are isomorphic (see Corollary \ref{cor:Pn}, 
\cite[Corollary IV.3.2]{Har70} for the non-equivariant case).

\subsection*{Acknowledgements}
It is a pleasure to thank Amalendu Krishna for suggesting the problem,
K. V. Shuddhodan for his lecture on the Grothendieck-Lefschetz theorems
and Anand Sawant, Paul Arne {\O}stv{\ae}r for their comments and suggestions 
which led to an improved exposition.
The writing of this paper was completed during the  
trimester program ``K-theory and Related Fields" at the
Hausdorff Research Institute for Mathematics and we 
thank the Institute and 
the organizers for their hospitality.
This work was supported by the RCN Frontier
Research Group Project no. 250399 ``Motivic Hopf equations".

\section{Preliminaries} \label{sec:Prelim}
We will work over a base field $k$ of arbitrary characteristic.
All schemes are assumed to be separated and of finite type over $k$.
The term variety will refer to an integral scheme over $k$.
In this note, $G$ will always denote a finite group.

\subsection{Group action on formal schemes} \label{sec:Group-action}
In this section, we recall briefly the notion of $G$-action on
a locally ringed space and equivariant sheaves. 
In the process we set up notations and terminologies
for the rest of the paper.

Let $(X, \sO_X)$ be a locally ringed space.
A {\sl $G$-action} on $(X, \sO_X)$ is a group homomorphism from $G$
to the group of automorphisms of $(X, \sO_X)$.
A morphism $\theta: (X, \sO_X) \to (X', \sO_{X'})$ of locally ringed spaces 
with $G$-actions is said to be {\sl $G$-equivariant} if it is compatible with the
$G$-actions on $(X, \sO_X)$ and $(X', \sO_{X'})$.

\begin{defn} \label{def:G-module}
Let $(X, \sO_{X})$ be a locally ringed space with a given $G$-action.
\begin{enumerate}
\item A {\sl $G$-sheaf} of abelian groups on $X$ is a
sheaf of abelian groups $\sF$ together with a collection of sheaf isomorphisms
$\phi_g : \sF \isoto g_*\sF$, for each $g \in G$,
which are subject to the conditions $\phi_e = id$ and 
$\phi_{gh} = h_*(\phi_g) \circ \phi_h$, for all $h \in G$.
We shall denote a $G$-sheaf in the sequel by $(\sF, \{\phi_g\})$.

\item A {\sl $G$-module} is a $G$-sheaf $(\sF, \{\phi_g\})$ 
such that $\sF$ is an $\sO_X$-module
and each $\phi_g$ is an $\sO_X$-module isomorphism. 
A {\sl locally free }(resp. {\sl invertible}) {\sl $G$-sheaf} is a
$G$-module $(\sF, \{\phi_g\})$, where $\sF$ is a locally free
(resp. invertible) sheaf of $\sO_X$-module.

\item A {\sl $G$-equivariant morphism} of $G$-sheaves
$f: (\sF, \{\phi_g\}) \to (\sG, \{\phi'_g\})$
is a morphism of sheaves $f: \sF \to \sG$ such that 
$\phi'_g \circ f = g_*(f) \circ \phi_g$, for all $g \in G$.
The set of $G$-equivariant morphisms from 
$\sF$ to $\sG$ is denoted by $\Hom^G(\sF, \sG)$.
If $\sF$ and $\sG$ are $G$-modules, the set of $G$-equivariant 
$\sO_X$-module homomorphisms is denoted by $\Hom^G_{\sO_X}(\sF, \sG)$.
\end{enumerate}
\end{defn}

\begin{exm} \label{ex:action-sch}
When $X$ is a $k$-scheme, a $G$-action on the locally ringed space $X$
defined as above coincides with the usual notion of group scheme action
on schemes, where $G$ is viewed as a finite constant group scheme over $k$. 
Let $\sigma: G \times X \to X$ denote the action map.
It is easy to verify that a $G$-module structure on a sheaf 
$F$ of $\sO_X$-modules as above
is equivalent to giving an isomorphism of $\sO_{G \times X}$-modules,
$\phi: \sigma^*F \to p_2^*F$, over $G \times X$.
Therefore $F$ is a $G$-module in the sense of \cite{GIT}.
\end{exm}

\begin{exm} \label{ex:Action-Formal-sch}
Let $X$ be a noetherian scheme with $G$-action and let $Y$ be a $G$-invariant 
closed subscheme, defined by a sheaf of ideals $\sI$ 
(which is a $G$-submodule of $\sO_X$).
Then $(\hat{X}, \sO_{\hat{X}})$, the formal completion of $X$ along $Y$,
has an induced $G$-action, as the direct image functor commutes 
with inverse limits. 
The canonical morphism $i: \hat{X} \to X$ is then $G$-equivariant.
Given a $G$-equivariant coherent $\sO_X$-module $F$,
the completion $\hat{F}$ of $F$ along $Y$, 
has a natural $G$-equivariant $\sO_{\hat{X}}$-module structure.
Furthermore, the functor $F \mapsto \hat{F}$ from the category of
coherent $\sO_X$-modules to the category of coherent $\sO_{\hat{X}}$-modules is exact
(see \cite[Corollary II.9.8]{Har13}) and therefore it is easy to see that it
induces an exact functor on the category of coherent $G$-modules.
\end{exm}

Let $(X, \sO_X)$ be a locally ringed space with $G$-action.
Let $Sh^G(X)$ denote the category of $G$-sheaves,
which is an abelian category with enough injectives.
Given a $G$-sheaf $\sF$ on $X$, 
the group $G$ acts on the global sections 
$\Gamma(X, \sF)$. Let $\Gamma(X, \sF)^G$ denote 
the $G$-invariant global sections, and
let $H^p(G; X, -)$ denote
the right derived functors of the 
functor $\Gamma(X, -)^G$. The groups $H^p(G; X, \sF)$ are the
$G$-cohomology groups of $\sF$.

\begin{lem} \label{lem:G-eq-hom=G-inv-hom}
Let $(X, \sO_X)$ be a locally ringed space with $G$-action
and let $(\sF, \{\phi_g\})$, $(\sG, \{\phi'_g\})$ be $G$-modules.
The sheaf $\sHom_{\sO_X}(\sF, \sG)$ 
has an induced $G$-module structure such that 
$
\Hom^G_{\sO_X}(\sF, \sG) 
= \Hom_{\sO_X}(\sF, \sG)^G.
$
\end{lem}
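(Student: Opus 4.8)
Write $\sH := \sHom_{\sO_X}(\sF, \sG)$. The plan is to first equip $\sH$ with isomorphisms $\psi_g \colon \sH \isoto g_*\sH$ built from $\{\phi_g\}$ and $\{\phi'_g\}$, to verify that the pair $(\sH, \{\psi_g\})$ is a $G$-module, and then to obtain the displayed identity by unwinding the definition of the $G$-action on global sections and comparing it with the equivariance condition of Definition~\ref{def:G-module}(3).

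For the construction, I would use the canonical isomorphism
\[
\beta_g \colon g_*\sHom_{\sO_X}(\sF, \sG) \isoto \sHom_{\sO_X}(g_*\sF, g_*\sG),
\]
which exists because $g$ is an automorphism of the ringed space $(X, \sO_X)$: over an open set it sends a local homomorphism $\eta$ to its pushforward $g_*\eta$, and the ring isomorphism $\sO_X \isoto g_*\sO_X$ induced by $g$ turns the target into an $\sO_X$-module, so that $\beta_g$ is an $\sO_X$-linear isomorphism. Since $\phi_g$ and $\phi'_g$ are $\sO_X$-module isomorphisms, the rule $\theta \mapsto \phi'_g \circ \theta \circ \phi_g^{-1}$ defines an $\sO_X$-linear isomorphism $c_g \colon \sH \isoto \sHom_{\sO_X}(g_*\sF, g_*\sG)$, and I set $\psi_g := \beta_g^{-1} \circ c_g$. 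Each $\psi_g$ is then an $\sO_X$-module isomorphism, and $\psi_e = \id$ because $\phi_e$, $\phi'_e$ and $\beta_e$ are all identities. The remaining axiom $\psi_{gh} = h_*(\psi_g) \circ \psi_h$ is the heart of the argument: I would verify it by substituting the definitions and feeding in the cocycle relations for $\{\phi_g\}$ and $\{\phi'_g\}$, together with the compatibility of the $\beta_g$ with the identification $(gh)_* = h_* g_*$ and the functoriality of $g_*$ on $\sHom$-sheaves. \textbf{This cocycle bookkeeping is the main obstacle}, precisely because $\sHom$ is contravariant in its first argument; the inverse $\phi_g^{-1}$ appearing in $c_g$ is exactly what makes the two cocycle relations recombine correctly.

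For the displayed equality, recall that the $G$-action on $\Gamma(X, \sH) = \Hom_{\sO_X}(\sF, \sG)$ is obtained by applying $\Gamma(X, -)$ to $\psi_g$ and using the canonical identification $\Gamma(X, g_*\sH) = \Gamma(X, \sH)$. Taking global sections of $\psi_g = \beta_g^{-1} \circ c_g$, one finds that a homomorphism $f \colon \sF \to \sG$ is carried to the section $g \cdot f$ characterised by $g_*(g \cdot f) = \phi'_g \circ f \circ \phi_g^{-1}$. Hence $f$ is fixed by every $g \in G$ if and only if $g_*(f) = \phi'_g \circ f \circ \phi_g^{-1}$ for all $g$, that is, if and only if $\phi'_g \circ f = g_*(f) \circ \phi_g$ for all $g$. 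By Definition~\ref{def:G-module}(3) this last condition is exactly the statement that $f \in \Hom^G_{\sO_X}(\sF, \sG)$, and therefore $\Gamma(X, \sH)^G = \Hom^G_{\sO_X}(\sF, \sG)$, as required.
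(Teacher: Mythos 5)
Your construction is exactly the paper's: the map $c_g\colon \theta \mapsto \phi'_g \circ \theta \circ \phi_g^{-1}$ is the paper's $\rho_g$, your $\beta_g^{-1}$ is the paper's canonical isomorphism $\theta_g$, and the final unwinding of the fixed-point condition matches the paper's chain of equivalences verbatim. The proposal is correct and follows the same route (the paper likewise leaves the cocycle verification for $\psi_{gh}=h_*(\psi_g)\circ\psi_h$ as a routine check), so there is nothing to add.
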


\begin{proof}
For each $g \in G$,
let $\rho_g: \sHom_{\sO_X}(\sF, \sG) \to \sHom_{\sO_X}(g_*\sF, g_*\sG)$
be the $\sO_X$-module homomorphism defined as follows.
Given an open subset $U$ of $X$ and $f \in Hom_{\sO_U}(\sF|_U, \sG|_U)$, let 
$\rho_g|_U(f) := (\phi'_g|_U) \circ f \circ (\phi_g^{-1}|_U).$
Let $\tilde{\rho_g} = \theta_g\circ \rho_g$,
where $\theta_g: \sHom_{\sO_X}(g_*\sF, g_*\sG)) {\overset{\theta_g} \to} 
g_*(\sHom_{\sO_X}(\sF, \sG))$ are the canonical isomorphisms.
Then $(\sHom_{\sO_X}(\sF, \sG), \{\rho_g\})$ is a $G$-module.
Now, 
\[
\begin{array}{llll}
f \in \Hom^G_{\sO_X}(\sF, \sG) & \Leftrightarrow & \rho_g(X)(f) = \phi'_g \circ f \circ 
(\phi_g)^{-1} = g_*(f), \forall g \in G &\\
&  \Leftrightarrow & \tilde{\rho_g}(X)(f) = f, \forall g \in G &\\
& \Leftrightarrow & f \in  \Hom_{\sO_X}(\sF, \sG)^G. &\\
\end{array}
\] 
\end{proof}

\begin{remk} \label{rem:G-sheaves-G-eq-hom}
If $\sF$ and $\sG$ are $G$-sheaves then one can
show using the same argument as above that $\sHom(\sF, \sG)$ is a 
$G$-sheaf and $\Hom^G(\sF, \sG) = \Hom(\sF, \sG)^G$.
\end{remk}

\begin{cor} \label{cor:nw-van-eq-sec}
Let $(X, \sO_X)$ be a locally ringed space with $G$-action and let
$\sF$ be an invertible $G$-sheaf on $X$. There is a $G$-equivariant isomorphism
$\sO_X \xrightarrow{\simeq} \sF$,
where $\sO_X$ has the canonical $G$-action, 
if and only if $\Gamma(X, \sF)^G$ has a nowhere vanishing section.
\end{cor}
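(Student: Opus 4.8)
The plan is to reduce the statement to two ingredients: the non-equivariant fact that an $\sO_X$-module morphism $\sO_X \to \sF$ is an isomorphism exactly when the image of the unit section is nowhere vanishing, and \lemref{lem:G-eq-hom=G-inv-hom} applied with $\sF$ replaced by $\sO_X$. First I would record the canonical $G$-module structure on $\sO_X$: each $g \in G$ acts as an automorphism of the locally ringed space $(X, \sO_X)$, and the associated comorphism supplies structure isomorphisms $\phi^{\sO}_g : \sO_X \isoto g_*\sO_X$ satisfying the cocycle condition of \defref{def:G-module}. With this structure, I would identify $\sHom_{\sO_X}(\sO_X, \sF)$ with $\sF$ via evaluation at the unit section $1$, and verify that this canonical isomorphism is $G$-equivariant, i.e.\ that it intertwines the $G$-module structure $\{\rho_g\}$ built in the proof of \lemref{lem:G-eq-hom=G-inv-hom} with the given structure $\{\phi_g\}$ on $\sF$. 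Taking global sections, passing to $G$-invariants, and invoking \lemref{lem:G-eq-hom=G-inv-hom} then produces a natural bijection
\[
\Hom^G_{\sO_X}(\sO_X, \sF) = \Hom_{\sO_X}(\sO_X, \sF)^G \cong \Gamma(X, \sF)^G,
\]
under which a $G$-equivariant morphism $\psi$ corresponds to the invariant section $s = \psi(1)$.

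With this bijection in hand both implications are short. For the forward direction, given a $G$-equivariant isomorphism $\psi : \sO_X \isoto \sF$, the section $s = \psi(1)$ lies in $\Gamma(X, \sF)^G$ by the correspondence above, and it is nowhere vanishing because $\psi$ is an isomorphism (on each stalk $\psi_x$ is an isomorphism, so $s_x$ generates $\sF_x$). Conversely, a nowhere vanishing $s \in \Gamma(X, \sF)^G$ corresponds to a $G$-equivariant morphism $\psi : \sO_X \to \sF$ with $\psi(1) = s$; I would then check that $\psi$ is an isomorphism by the non-equivariant criterion, working locally where $\sF$ is free of rank one so that $s$ is a unit of $\sO_X$ and multiplication by $s$ is an isomorphism, these local isomorphisms gluing to an isomorphism of sheaves. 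Being both $G$-equivariant and an isomorphism, $\psi$ is the desired $G$-equivariant isomorphism.

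The routine part is the local invertibility criterion. The one step requiring genuine care, and the main (if modest) obstacle, is the compatibility check: that the evaluation map $\sHom_{\sO_X}(\sO_X, \sF) \isoto \sF$ is $G$-equivariant for the structures $\{\rho_g\}$ and $\{\phi_g\}$. I expect this to amount to unwinding the definition of $\rho_g$ from the proof of \lemref{lem:G-eq-hom=G-inv-hom} in the case $\sF_1 = \sO_X$, substituting $\phi^{\sO}_g = g^{\#}$, and using that the comorphism fixes the unit section, $g^{\#}(1) = 1$, so that $\rho_g$ transports to $\phi_g$ under evaluation at $1$. Once this is confirmed, the $G$-action on $\Hom_{\sO_X}(\sO_X, \sF)$ agrees with the $G$-action on $\Gamma(X, \sF)$, the displayed bijection is legitimate, and the corollary follows.
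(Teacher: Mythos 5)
Your proposal is correct and follows essentially the same route as the paper: the paper's proof likewise invokes \lemref{lem:G-eq-hom=G-inv-hom}, identifies $\Hom_{\sO_X}(\sO_X,\sF)$ with $\Gamma(X,\sF)$ as $G$-sets, and uses the non-equivariant correspondence between isomorphisms $\sO_X \to \sF$ and nowhere vanishing sections. You merely spell out the equivariance of the evaluation map and the local invertibility check, which the paper leaves implicit.
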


\begin{proof}
The proof follows from \lemref{lem:G-eq-hom=G-inv-hom}, since
$\Hom_{\sO_X}(\sO_X, \sF) = \Gamma(X, \sF)$ as $G$-sets
and isomorphisms $\sO_X \to \sF$ correspond to nowhere vanishing
sections in $\Gamma(X, \sF)$.
\end{proof}

\subsection{The equivariant Lefschetz Conditions} \label{sec:Lefconditions}
In \cite[Section X.2]{SGA2}, 
Grothendieck introduced the Lefschetz conditions for pairs $(X,Y)$, inspired by Lefschetz,
where $X$ is a locally noetherian scheme and $Y$ is a closed subscheme of $X$.
These were essential in the proof of Grothendieck's theorems comparing
the Picard groups and the fundamental groups of a projective 
variety $X$ with a complete intersection subvariety $Y$.
For schemes with action of a finite group $G$, we define the equivariant
Lefschetz conditions in this section and prove equivariant analogues of
some results in the Grothendieck-Lefschetz theory
that will be useful in the sequel.

\begin{defn} \label{def:Lef-condition}
Let $X$ be a noetherian scheme with $G$-action, and let
$Y \subseteq X$ be a $G$-invariant closed subscheme.
Let $\hat{X}$ be the formal completion of $X$ along $Y$.
Then $\hat{X}$ is a locally ringed space with $G$-action as discussed in
\exref{ex:Action-Formal-sch}. 

\begin{enumerate}
\item The pair $(X, Y)$ satisfies the
{\sl equivariant Lefschetz condition}, 
written $\Lef^G(X,Y)$, if for every $G$-invariant open set $U \supseteq Y$,
and every $G$-equivariant locally free sheaf $E$ on $U$,
there exists a $G$-invariant open set $U'$ with $Y \subseteq U' \subseteq U$
such that the natural map
$
\Gamma(U', E|_{U'})^G \isoto \Gamma(\hat{X}, \hat{E})^G
$
is an isomorphism.

\item The pair $(X,Y)$ satisfies the 
{\sl equivariant effective Lefschetz condition},
written $\eLef^G(X,Y)$, if $\Lef^G(X,Y)$ holds, and in addition,
for every $G$-equivariant locally free sheaf $\sE$ on $\hat{X}$,
there exists a $G$-invariant open set $U \supseteq Y$ and a 
$G$-equivariant locally free sheaf $E$ on $U$ such that 
$\hat{E} \simeq \sE$
as $G$-modules.
\end{enumerate}
\end{defn}

With $(X,Y)$ as above,
let $E$ and $F$ be locally free $G$-sheaves defined on
$G$-invariant open neighbourhoods $U$ and $V$ of $Y$, respectively.
We write $E \sim F$ if there exists a $G$-invariant
open set $W$ with $Y \subseteq W \subseteq U \cap V$ such that
$E|_W \simeq F|_W$ as $G$-sheaves.
We define the category ${\rm LF}^0_G(Y)$ of germs of 
locally free $G$-sheaves around $Y$ as follows.
An object of this category is a class of locally free $G$-sheaves
defined on $G$-invariant open neighbourhoods of $Y$ under the equivalence 
relation $\sim$. For any two objects $[E]$ and $[F]$ in ${\rm LF}^0_G(Y)$,
the set of homomorphisms from $[E]$ to $[F]$ is defined to be the set
$\varinjlim_{U} ~ \Hom_{\sO_U}^G(E,F)$, where the colimit is
taken over all $G$-invariant open neighbourhoods $U$ of $Y$ such that both
$E$ and $F$ are defined over $U$.
Let ${\rm LF}_G(\hat{X})$ denote the category of 
locally free $G$-sheaves on $\hat{X}$.

\begin{lem} \label{lem:Germs-G-V.Bs}
Let $\wedge : {\rm LF}^0_G(Y) \rightarrow {\rm LF}_G(\hat{X})$
be the functor sending 
$E \mapsto \hat{E}$.
\begin{enumerate}
\item If $\Lef^G(X,Y)$ holds, then $\wedge$ is fully faithful.
\item If $\eLef^G(X,Y)$ holds, then $\wedge$ is an equivalence of 
categories.
\end{enumerate}
\end{lem}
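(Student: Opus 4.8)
The plan is to prove both parts by reducing the equivariant statements to the underlying non-equivariant Grothendieck--Lefschetz theory, using \lemref{lem:G-eq-hom=G-inv-hom} to convert $G$-equivariant $\Hom$-groups into $G$-invariants of ordinary $\Hom$-sheaves. The key observation is that the condition $\Lef^G(X,Y)$ is exactly the tool needed to identify morphisms between germs with morphisms between their completions.

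\emph{Full faithfulness under $\Lef^G(X,Y)$.} First I would fix two germs $[E]$ and $[F]$, represented by locally free $G$-sheaves $E$, $F$ on a common $G$-invariant open neighbourhood $U$ of $Y$. By definition, the $\Hom$-set from $[E]$ to $[F]$ in ${\rm LF}^0_G(Y)$ is $\varinjlim_{U'} \Hom^G_{\sO_{U'}}(E,F)$, while the $\Hom$-set from $\hat E$ to $\hat F$ in ${\rm LF}_G(\hat X)$ is $\Hom^G_{\sO_{\hat X}}(\hat E, \hat F)$. The heart of the argument is to apply $\Lef^G(X,Y)$ to the internal-hom sheaf $\sHom_{\sO_U}(E,F)$, which is itself a $G$-equivariant locally free sheaf on $U$ (it is locally free since $E$ and $F$ are, and carries the $G$-module structure constructed in \lemref{lem:G-eq-hom=G-inv-hom}). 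Applying $\Lef^G(X,Y)$ to this sheaf gives a $G$-invariant open $U'$ with $Y \subseteq U' \subseteq U$ and an isomorphism
\[
\Gamma\bigl(U', \sHom_{\sO_{U'}}(E,F)\bigr)^G \isoto \Gamma\bigl(\hat X, \widehat{\sHom_{\sO_U}(E,F)}\,\bigr)^G.
\]
By \lemref{lem:G-eq-hom=G-inv-hom}, the left-hand side is $\Hom^G_{\sO_{U'}}(E,F)$. For the right-hand side I need the compatibility of completion with internal hom, namely a $G$-equivariant isomorphism $\widehat{\sHom_{\sO_U}(E,F)} \simeq \sHom_{\sO_{\hat X}}(\hat E, \hat F)$; this holds because completion is exact on coherent modules (\exref{ex:Action-Formal-sch}) and $E$ is locally free, so the standard non-equivariant comparison applies, and one checks the isomorphism respects the $G$-structures by tracking the $\rho_g$ defined in \lemref{lem:G-eq-hom=G-inv-hom}. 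Taking $G$-invariants and again invoking \lemref{lem:G-eq-hom=G-inv-hom} identifies the right-hand side with $\Hom^G_{\sO_{\hat X}}(\hat E, \hat F)$. Passing to the colimit over $U'$ (noting the displayed map is compatible with restriction) yields the desired bijection $\varinjlim_{U'}\Hom^G_{\sO_{U'}}(E,F) \isoto \Hom^G_{\sO_{\hat X}}(\hat E, \hat F)$, which is precisely the statement that $\wedge$ is fully faithful.

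\emph{Essential surjectivity under $\eLef^G(X,Y)$.} Since $\eLef^G(X,Y)$ includes $\Lef^G(X,Y)$, the functor is already fully faithful by part (1), so it remains only to prove essential surjectivity. Given any locally free $G$-sheaf $\sE$ on $\hat X$, the second clause of $\eLef^G(X,Y)$ supplies a $G$-invariant open $U \supseteq Y$ and a $G$-equivariant locally free sheaf $E$ on $U$ with $\hat E \simeq \sE$ as $G$-modules. Then $[E]$ is an object of ${\rm LF}^0_G(Y)$ with $\wedge([E]) = \hat E \simeq \sE$, so every object of ${\rm LF}_G(\hat X)$ is isomorphic to one in the image. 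Combined with full faithfulness, this gives the equivalence of categories.

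The main obstacle is the right-hand-side identification in part (1): verifying that the canonical comparison $\widehat{\sHom_{\sO_U}(E,F)} \simeq \sHom_{\sO_{\hat X}}(\hat E,\hat F)$ is genuinely $G$-equivariant, rather than merely an isomorphism of $\sO_{\hat X}$-modules. The non-equivariant isomorphism is classical, but one must check it intertwines the two $G$-module structures $\rho_g$ on either side, which amounts to a diagram chase tracking the definitions of $\rho_g$, the completion functor's commutation with $g_*$ (as in \exref{ex:Action-Formal-sch}), and the canonical isomorphisms $\theta_g$. Once this equivariance is in place, every remaining step is a routine application of \lemref{lem:G-eq-hom=G-inv-hom} together with the defining property of the Lefschetz conditions.
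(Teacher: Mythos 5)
Your proposal is correct and follows essentially the same route as the paper: apply $\Lef^G(X,Y)$ to the internal-hom sheaf $\sHom_{\sO_U}(E,F)$, identify $G$-invariant sections with equivariant homomorphisms via \lemref{lem:G-eq-hom=G-inv-hom}, use the $G$-equivariant comparison $\sHom_{\sO_U}(E,F)^{\wedge} \simeq \sHom_{\sO_{\hat X}}(\hat E,\hat F)$, and obtain essential surjectivity directly from the definition of $\eLef^G(X,Y)$. If anything, you are more explicit than the paper about the colimit over shrinking neighbourhoods and about checking that the completion--internal-hom comparison respects the $G$-structures, a point the paper asserts without elaboration.
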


\begin{proof}
Suppose $\Lef^G(X,Y)$ holds. Let $E, F \in {\rm LF}^0_G(Y)$. 
Without loss of generality, we may assume that $E, F$ are 
$G$-equivariant locally free $\sO_V$-modules for some $G$-invariant open
neighbourhood $V$ of $Y$.
Let $U$ be any $G$-invariant open neighbourhood of $Y$ 
such that $U \subseteq  V$.
$\sHom_{\sO_U}(E, F)$ is a $G$-equivariant locally free $\sO_U$-module,
by \lemref{lem:G-eq-hom=G-inv-hom}.
Since $\Lef^G(X,Y)$ holds, there exists a $G$-invariant open set $U'$ with
$Y \subseteq U' \subseteq U$ such that the natural map
$
\Gamma(U', \sHom_{\sO_U}(E, F))^G \isoto 
\Gamma(\hat{X}, \sHom_{\sO_U}(E, F)^{\wedge})^G
$
is an isomorphism.
Since $\sHom_{\sO_U}(E, F)^{\wedge} \isoto 
\sHom_{\sO_{\hat{X}}}(\hat{E}, \hat{F})$ as $G$-sheaves, 
$\Gamma(\hat{X}, \sHom_{\sO_U}(E, F)^{\wedge})^G
\isoto \Gamma(\hat{X}, \sHom_{\sO_{\hat{X}}}(\hat{E}, \hat{F}))^G$ is an isomorphism and
hence $\Gamma(U', \sHom_{\sO_U}(E, F))^G \isoto 
\Gamma(\hat{X}, \sHom_{\sO_{\hat{X}}}(\hat{E}, \hat{F}))^G$ is an isomorphism.
By \lemref{lem:G-eq-hom=G-inv-hom}, the above isomorphism can be
rewritten as 
$\Hom^G_{\sO_{U'}}(E,F) \isoto \Hom^G_{\sO_{\hat{X}}}(\hat{E}, \hat{F}).$
This proves that the functor $\wedge$ is fully faithful.
 If $\eLef^G(X,Y)$ holds, $\wedge$ is further essentially
surjective (by definition), and therefore an equivalence of categories.
\end{proof}

\begin{prop} \label{prop:Leff-cond-complete-int}
Let $X$ be a non-singular projective variety with $G$-action.
Let $Y \subseteq X$ be a $G$-invariant closed subscheme, which is 
a scheme-theoretic complete intersection in $X$.
If $\dim(Y) \geq 2$,
then $\eLef^G(X,Y)$ holds.
\end{prop}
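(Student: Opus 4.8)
The plan is to reduce the equivariant effective Lefschetz condition $\eLef^G(X,Y)$ to its non-equivariant counterpart combined with a descent argument for the $G$-action. Since the non-equivariant statement $\eLef(X,Y)$ for a complete intersection $Y$ of dimension $\geq 2$ in a non-singular projective variety is precisely the content of the classical Grothendieck-Lefschetz theory (see \cite[Chapter IV]{Har70}, \cite[Section X.2]{SGA2}), the real task is to upgrade everything to keep track of the $G$-equivariant structures. The key observation is that $G$ is a \emph{finite} group acting by automorphisms, so the $G$-equivariant data on a sheaf is an extra finite collection of isomorphisms $\{\phi_g\}$ satisfying a cocycle condition, and the associated invariant-sections functor $\Gamma(-)^G$ is exact in characteristic zero (or more generally once we take $G$-cohomology into account). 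Because $Y$ is a \emph{$G$-invariant} complete intersection, its defining ideal $\sI$ is a $G$-submodule of $\sO_X$ and the formal completion $\hat X$ carries an induced $G$-action, as recorded in \exref{ex:Action-Formal-sch}.

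First I would establish the $\Lef^G(X,Y)$ half. Given a $G$-invariant open $U \supseteq Y$ and a $G$-equivariant locally free sheaf $E$ on $U$, I apply the underlying non-equivariant condition $\Lef(X,Y)$ to the sheaf $E$ (forgetting the $G$-structure) to obtain a $G$-invariant open $U'$—here one must check that the open set produced by the non-equivariant theorem can be taken $G$-invariant, which follows by intersecting the translates $\bigcap_{g \in G} g(U')$, a finite intersection that still contains $Y$ since $Y$ is $G$-invariant and shrinking preserves the isomorphism on formal sections. This gives an isomorphism
\[
\Gamma(U', E|_{U'}) \isoto \Gamma(\hat X, \hat E)
\]
that is compatible with the $G$-actions by naturality of the completion functor; taking $G$-invariants and using exactness of $(-)^G$ on the relevant sheaf cohomology yields the desired isomorphism on $G$-invariant sections. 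The second half, the effectivity statement, asks that every $G$-equivariant locally free sheaf $\sE$ on $\hat X$ be the completion of some $G$-equivariant locally free sheaf on a $G$-invariant neighbourhood of $Y$. For this I would first algebraize $\sE$ non-equivariantly via $\eLef(X,Y)$ to obtain a locally free $E$ on some open $U \supseteq Y$ with $\hat E \simeq \sE$, then transport the $G$-equivariant structure $\{\phi_g\}$ on $\sE$ back to $E$ using the full faithfulness of the completion functor established in \lemref{lem:Germs-G-V.Bs}(1): each $\phi_g \colon \sE \isoto g_*\sE$ corresponds under $\wedge$ to a unique germ of a map $E \isoto g_*E$, and the cocycle identities $\phi_{gh} = h_*(\phi_g)\circ \phi_h$ descend because $\wedge$ is faithful, so they hold already at the level of germs after passing to a common $G$-invariant neighbourhood.

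The main obstacle I expect is ensuring that the open sets and isomorphisms produced by the non-equivariant machinery can be chosen \emph{$G$-invariant} and \emph{$G$-equivariant} in a coherent way, rather than merely checking each $g$ separately. The delicate point is the cocycle condition: full faithfulness of $\wedge$ guarantees that the transported maps on germs are \emph{unique}, and it is exactly this uniqueness that forces the cocycle relations to transfer from $\hat X$ down to the germ level. I would therefore present \lemref{lem:Germs-G-V.Bs} as the technical engine: once $\wedge$ is fully faithful (from $\Lef^G$) and essentially surjective after equipping the algebraized sheaf with its descended $G$-structure, the pair $([E],\{\text{germs of }\phi_g\})$ is a well-defined object of ${\rm LF}^0_G(Y)$ whose image under $\wedge$ is $G$-isomorphic to $\sE$, which is precisely the content of $\eLef^G(X,Y)$. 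A secondary point requiring care is that taking $G$-invariants commutes with the relevant completions and global-section functors; this is where the characteristic-zero hypothesis (via the exactness noted before \lemref{lem:G-eq-hom=G-inv-hom} and the enough-injectives structure of $Sh^G(X)$) enters to avoid higher $G$-cohomology obstructions.
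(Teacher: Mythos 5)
Your overall strategy coincides with the paper's: establish $\Lef^G(X,Y)$ by applying Hartshorne's non-equivariant complete-intersection results and passing to $G$-invariants, then algebraize a locally free $G$-sheaf $\sE$ on $\hat{X}$ non-equivariantly, replace the resulting open set by $\bigcap_{g\in G}gU$ to make it $G$-invariant, and transport the structure maps $\phi_g$ by a uniqueness-of-lifts argument that forces the cocycle identities. However, one of your justifications is a genuine logical misstep. You lift each $\phi_g\colon \sE \to g_*\sE$ to a map $E \to g_*E$ by invoking the full faithfulness of $\wedge$ from \lemref{lem:Germs-G-V.Bs}(1). That lemma identifies $\Hom^G$-sets between objects of ${\rm LF}^0_G(Y)$, i.e.\ $G$-\emph{equivariant} morphisms between sheaves that already carry $G$-structures; at this stage $E$ carries no $G$-structure (that is precisely what you are constructing), and $\phi_g$ is not an equivariant morphism in the relevant sense, so the cited lemma does not apply and the appeal to it is essentially circular. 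What is needed, and what the paper uses, is the \emph{non-equivariant} isomorphism $\Hom_{\sO_U}(E, g_*E) \isoto \Hom_{\hat{X}}(\sE, g_*\sE)$, obtained by applying the complete-intersection section-extension result of \cite[Corollary IV.1.2, Proposition IV.1.1]{Har70} to the locally free sheaf $\sHom_{\sO_U}(E, g_*E)$. With that substitution your uniqueness argument for the cocycle condition goes through verbatim.

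A secondary point: you appeal to exactness of $(-)^G$ ``in characteristic zero'' to pass to invariant sections, and present characteristic zero as an ingredient. The proposition carries no characteristic hypothesis and none is needed; one merely applies the functor $(-)^G$ to an isomorphism of $G$-modules $\Gamma(U,E)\isoto\Gamma(\hat{X},\hat{E})$, which formally yields an isomorphism on invariants. (Relatedly, since for a complete intersection the section-extension isomorphism holds for \emph{every} open $U\supseteq Y$, your extra shrinking to $\bigcap_{g}gU'$ in the $\Lef^G$ step is harmless but unnecessary.)
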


\begin{proof}
Let $U \supseteq Y$ be any $G$-invariant open set, and let $E$
be a locally free $G$-sheaf on $U$.
Since $Y$ is a complete intersection, by
\cite[Corollary IV.1.2]{Har70} and the proof of \cite[Proposition IV.1.1]{Har70},
the $G$-equivariant restriction map
$
\Gamma(U, E) \isoto \Gamma(\hat{X}, \hat{E})
$
is an isomorphism.
This induces an isomorphism
$
\Gamma(U, E)^G \isoto \Gamma(\hat{X}, \hat{E})^G.
$
Therefore $\Lef^G(X,Y)$ holds.

Let $\hat{X}$ be the formal completion of $X$ along $Y$,
and let $(\sE, \{\phi_g\})$ be a locally free $G$-sheaf on $\hat{X}$.
Since $Y$ is a scheme-theoretic local complete intersection, by \cite[Theorem IV.1.5]{Har70},
we can find an open set $U \supseteq Y$ (not necessarily $G$-invariant) 
and a locally free sheaf $E$ on $U$ such that $\theta: \hat{E} \isoto \sE$ 
non-equivariantly. We may assume that $U$ is $G$-invariant by replacing
$U$ by the open set $\bigcap_{g \in G} gU$.
For each $g \in G$, $g_*E$ is then a locally free sheaf on $U$ such that
we have induced isomorphisms $\hat{g_* E} \simeq g_*\sE$, 
since direct image functor commutes with inverse limits.
Since $E, g_*E$ are locally free sheaves on $U$,  $\sHom_{\sO_U}(E, g_*E)$
is a locally free $\sO_U$-module.
Again as above, we have isomorphisms
$
\Hom_{\sO_U}(E, g_*E) \isoto \Hom_{\hat{X}}(\hat{E}, g_*{\hat{E}}) 
\isoto \Hom_{\hat{X}}(\sE, g_* \sE)
$
for each $g \in G$. Therefore, 
$\phi_g \in \Hom_{\hat{X}}(\sE, g_* \sE)$ can be uniquely lifted to
a morphism $\tilde{\phi}_g \in \Hom_{\sO_U}(E, g_*E)$. 
Since the lifts are unique and $\{\phi_g\}_{g \in G}$ defines a
$G$-module structure on $\sE$,
$\{\tilde{\phi}_g\}_{g \in G}$ defines a $G$-module structure on $E$.
Further $\theta : \hat{E} \to \sE$ is a $G$-equivariant morphism, by definition
of the $G$-action on $E$. Therefore, $\eLef^G(X, Y)$ holds.
\end{proof}

\section{Equivariant Grothendieck-Lefschetz theorem} \label{sec:Main-thm}
We prove Theorem \ref{cor:char0} in this section. 
The following Lemma
identifying the equivariant Picard groups of a variety $X$
and its formal completion $\hat{X}$
will be crucial for proving our main result.

\begin{lem} \label{lem:PicG(X)=PicG(X^)}
Let $X$ be a non-singular variety with $G$-action
and let $Y \subseteq X$ be a $G$-invariant closed subscheme
such that $Y$ meets every effective divisor on $X$.
Let $\hat{X}$ denote the completion of $X$ along $Y$ with the induced
$G$-action. 
Assume that $\dim(X) \geq 2$ and $\eLef^G(X,Y)$ holds.
Then the canonical map
$
\Pic^G(X) \to \Pic^G(\hat{X})
$
is an isomorphism.
\end{lem}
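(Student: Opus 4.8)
The plan is to factor the canonical map $\Pic^G(X) \to \Pic^G(\hat{X})$ through the equivariant ``germ'' Picard group $\varinjlim_{U} \Pic^G(U)$, the colimit being taken over all $G$-invariant open neighbourhoods $U$ of $Y$. Since $\eLef^G(X,Y)$ holds, \lemref{lem:Germs-G-V.Bs}(2) gives an equivalence $\wedge : {\rm LF}^0_G(Y) \to {\rm LF}_G(\hat{X})$. Completion is monoidal and preserves rank, and every open subscheme of the integral scheme $X$ is connected, so $\wedge$ restricts to an equivalence between the full subcategories of invertible objects; passing to isomorphism classes identifies $\varinjlim_{U} \Pic^G(U)$ with $\Pic^G(\hat{X})$ as abelian groups. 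As the image of $(L, \{\phi_g\})$ under this identification is precisely its completion $\hat{L}$, the composite $\Pic^G(X) \to \varinjlim_{U} \Pic^G(U) \isoto \Pic^G(\hat{X})$ is the canonical map. It therefore suffices to show that the restriction map $\Pic^G(X) \to \varinjlim_{U} \Pic^G(U)$ is an isomorphism.

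The geometric input is a consequence of the hypothesis that $Y$ meets every effective divisor on $X$: no prime divisor of $X$ can be contained in $X \setminus Y$. Hence for every $G$-invariant open $U \supseteq Y$ the closed set $X \setminus U \subseteq X \setminus Y$ contains no prime divisor and so has codimension $\geq 2$ (this is where $\dim(X) \geq 2$ enters). Since $X$ is non-singular, hence normal and locally factorial, two standard facts follow: the restriction $\Pic(X) \to \Pic(U)$ is an isomorphism (by the class-group computation of \cite[Proposition II.6.5]{Har13}), and for $j : U \hookrightarrow X$ the pushforward $j_*M$ of any invertible $\sO_U$-module $M$ is again invertible and extends $M$.

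For injectivity, let $(L, \{\phi_g\})$ be an invertible $G$-sheaf on $X$ whose restriction to some $G$-invariant $U \supseteq Y$ is equivariantly trivial; by \corref{cor:nw-van-eq-sec} there is a nowhere vanishing $s \in \Gamma(U, L|_U)^G$. Regarding $s$ as a rational section of $L$ on $X$, its divisor is supported on $X \setminus U$ and is therefore zero by the codimension fact, so $s$ extends to a nowhere vanishing global section, necessarily $G$-invariant by density; \corref{cor:nw-van-eq-sec} then yields $(L, \{\phi_g\}) \cong (\sO_X, {\rm can})$. For surjectivity, given an invertible $G$-sheaf $(M, \{\psi_g\})$ on a $G$-invariant $U \supseteq Y$, set $\tilde{M} := j_*M$. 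As $U$ is $G$-invariant we have $g \circ j = j \circ g|_U$, giving a canonical isomorphism $j_*(g|_U)_* \cong g_* j_*$; pushing the linearization forward defines $\tilde{\psi}_g := j_*\psi_g : \tilde{M} \isoto g_*\tilde{M}$, and because $j_*$ is a functor compatible with these identifications the relations $\tilde{\psi}_e = \id$ and $\tilde{\psi}_{gh} = h_*(\tilde{\psi}_g) \circ \tilde{\psi}_h$ transport from $\{\psi_g\}$. Thus $(\tilde{M}, \{\tilde{\psi}_g\})$ is an invertible $G$-sheaf on $X$ restricting to $(M, \{\psi_g\})$.

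I expect the main obstacle to be the surjectivity step, specifically verifying that the pushed-forward isomorphisms still satisfy the cocycle identity so that the $G$-linearization genuinely extends across the codimension $\geq 2$ locus, together with checking that the rank-one restriction of the equivalence in \lemref{lem:Germs-G-V.Bs} is compatible with the tensor (group) structure.
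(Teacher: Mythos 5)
Your proof is correct and follows essentially the same route as the paper: factor through $\Pic^G(U)$ for $G$-invariant neighbourhoods $U$ of $Y$, use $\eLef^G(X,Y)$ together with \lemref{lem:Germs-G-V.Bs} to identify invertible $G$-sheaves on $\hat{X}$ with germs near $Y$, and use that $\codim(X\setminus U, X)\geq 2$ to pass from $U$ back to $X$. The only difference is that for the last step the paper simply cites \cite[Lemma~2(1)]{EG98} for the invariance of $\Pic^G$ under removing a $G$-invariant closed subset of codimension $\geq 2$, whereas you prove this directly via extension of invariant nowhere-vanishing sections and pushforward of the linearization along $j:U\hookrightarrow X$.
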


\begin{proof}
Since $\eLef^G(X,Y)$ holds, every invertible $G$-sheaf on $\hat{X}$ extends uniquely 
to an invertible $G$-sheaf on some $G$-invariant open neighbourhood $U$
of $Y$ by \lemref{lem:Germs-G-V.Bs}. 
Since $Y$ meets every effective divisor on $X$,
we have $\codim(X-U, X) \geq 2$. 
Therefore by \cite[Lemma~2(1)]{EG98}, $\Pic^G(X) \to \Pic^G(U)$
is an isomorphism. The canonical morphism $\Pic^G(X) \to \Pic^G(\hat{X})$
factors through $\Pic^G(U)$ for every $G$-invariant open 
$U$ such that $Y \subseteq U$. Hence we conclude that 
$\Pic^G(X) \to \Pic^G(\hat{X})$ is an isomorphism.
\end{proof}

\begin{lem} \label{lem:PicG(X^)=inv. lim. PicG(Yn)}
Let $X$ be a proper scheme with $G$-action
and let $Y \subseteq X$ be a $G$-invariant closed subscheme
defined by a $G$-sheaf of ideals $\sI$.
For $n \geq 1$, let $Y_n$ denote the $G$-invariant closed subscheme
defined by the sheaf of ideals $\sI^n$. Then
$
\Pic^G(\hat{X}) \simeq \varprojlim_{n} ~ \Pic^G(Y_n).
$
\end{lem}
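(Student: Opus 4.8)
The plan is to reconstruct an invertible $G$-sheaf on $\hat{X}$ from the tower of its reductions to the infinitesimal neighbourhoods $Y_n$, and to measure the failure of this reconstruction to be unique by a $\varprojlim^1$-term which will vanish because $X$ is proper. Throughout, note that $\hat{X}$ and all the $Y_n$ share the underlying topological space of $Y$, and that $\sO_{\hat{X}} = \varprojlim_n \sO_{Y_n}$ with $\sO_{Y_n} = \sO_X/\sI^n$; in particular $\sO_{\hat{X}}^* = \varprojlim_n \sO_{Y_n}^*$ as sheaves on this common space.

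First I would set up the comparison as an equivalence of categories. By the theory of coherent sheaves on the Noetherian formal scheme $\hat{X}$ (\cite[Corollary II.9.8]{Har13}), together with its $G$-equivariant refinement recorded in \exref{ex:Action-Formal-sch}, the assignment $\sL \mapsto \{\sL_n := \sL \otimes_{\sO_{\hat{X}}} \sO_{Y_n}\}_n$ is an equivalence between coherent $G$-sheaves on $\hat{X}$ and compatible inverse systems $(\sF_n, \psi_n)$ of coherent $G$-sheaves on the $Y_n$, where each $\psi_n \colon \sF_{n+1}\otimes_{\sO_{Y_{n+1}}}\sO_{Y_n} \xrightarrow{\simeq} \sF_n$ is a $G$-isomorphism; the point is that a $G$-linearisation on $\sL$ is the same datum as a compatible family of $G$-linearisations on the reductions $\sL_n$, by the exactness of completion on $G$-modules. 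A coherent $G$-sheaf $\sL$ on $\hat{X}$ is invertible precisely when each $\sL_n$ is invertible on $Y_n$ (by Nakayama over the complete local rings $\sO_{\hat{X},y} = \varprojlim_n \sO_{Y_n,y}$), so the equivalence restricts to one between invertible $G$-sheaves on $\hat{X}$ and towers of invertible $G$-sheaves on the $Y_n$ linked by the $G$-isomorphisms $\psi_n$.

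Passing to isomorphism classes, I would read off the Milnor exact sequence of the tower of Picard groupoids. The $G$-automorphisms of an invertible $G$-sheaf on $Y_n$ are the $G$-invariant global units, i.e. $\operatorname{Aut}^G(\sL_n) = \Gamma(Y_n, \sO_{Y_n}^*)^G$, so the standard $\varprojlim^1$-sequence takes the form
\[
0 \longrightarrow {\varprojlim_n}^{1}\, \Gamma(Y_n, \sO_{Y_n}^*)^G \longrightarrow \Pic^G(\hat{X}) \longrightarrow \varprojlim_n \Pic^G(Y_n) \longrightarrow 0 .
\]
Concretely, surjectivity onto $\varprojlim_n \Pic^G(Y_n)$ is unobstructed: given a system of classes $([\sL_n])_n$ with $[\sL_{n+1}|_{Y_n}] = [\sL_n]$, one simply chooses $G$-isomorphisms $\psi_n$ realising these identities and assembles $(\sL_n, \psi_n)$ into an invertible $G$-sheaf on $\hat{X}$ via the equivalence above. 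Two such choices differ by a tower of $G$-automorphisms, and the resulting ambiguity --- equivalently, the kernel --- is exactly $\varprojlim^1$ of the groups $\Gamma(Y_n, \sO_{Y_n}^*)^G$.

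The crux, and the main obstacle, is therefore to show that this obstruction vanishes; this is where properness enters, exactly as in the non-equivariant Grothendieck--Lefschetz theory of \cite[Chapter IV]{Har70}. Since $X$ is proper over $k$, each $Y_n$ is proper and $A_n := \Gamma(Y_n, \sO_{Y_n})$ is a finite-dimensional $k$-algebra. Consequently the $G$-invariant unit groups $\Gamma(Y_n, \sO_{Y_n}^*)^G = (A_n^*)^G$ are the $k$-points of affine algebraic groups of finite type over $k$ (the $G$-fixed points of the unit groups of the finite-dimensional algebras $A_n$), and the transition maps are induced by homomorphisms of such groups. For a fixed level, the images of these homomorphisms form a descending chain of algebraic subgroups, which stabilises by Noetherianity; hence the tower $\{(A_n^*)^G\}_n$ satisfies the Mittag--Leffler condition and ${\varprojlim_n}^{1} (A_n^*)^G = 0$. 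Combined with the exact sequence above, this yields the desired isomorphism $\Pic^G(\hat{X}) \xrightarrow{\simeq} \varprojlim_n \Pic^G(Y_n)$. I expect the finite-dimensionality/Mittag--Leffler step to be the only genuinely delicate point; everything else is the formal equivariant bookkeeping already prepared in \exref{ex:Action-Formal-sch} and \lemref{lem:Germs-G-V.Bs}.
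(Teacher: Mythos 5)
Your proposal reaches the right statement but proves injectivity by a genuinely different mechanism than the paper. The paper defines the map $\sF \mapsto \{\sF\otimes_{\sO_{\hat X}}\sO_{Y_n}\}$ directly, obtains surjectivity exactly as you do (choose compatible $G$-isomorphisms and take $\varprojlim_n F_n$), but avoids any $\varprojlim^1$-bookkeeping for injectivity: if every reduction $F_n$ is $G$-equivariantly trivial, then by \corref{cor:nw-van-eq-sec} each $\Gamma(Y_n,F_n)^G$ contains a nowhere vanishing section; since these are finite-dimensional $k$-vector spaces (properness), the tower $\{\Gamma(Y_n,F_n)^G\}$ satisfies Mittag--Leffler with linear transition maps, the stable images still contain nowhere vanishing sections, and one lifts to a nowhere vanishing $s\in\Gamma(\hat X,\sF)^G=\varprojlim_n\Gamma(Y_n,F_n)^G$, which trivializes $\sF$ by \corref{cor:nw-van-eq-sec} again. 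The advantage of that route is that its Mittag--Leffler input concerns only finite-dimensional vector spaces, where stabilization of images is immediate; your route packages the same information more structurally through the Milnor sequence, at the cost of needing Mittag--Leffler for the towers of unit groups.

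That last step is the one place where your justification does not hold up as written. The image of a homomorphism of affine algebraic groups on $k$-points need not be the set of $k$-points of an algebraic subgroup when $k$ is not algebraically closed (squaring on $\G_m$ over $\R$ has image $\R_{>0}$), so ``the images form a descending chain of algebraic subgroups, which stabilises by Noetherianity'' is not a consequence of general algebraic-group theory, and stabilization of scheme-theoretic images would not by itself give the Mittag--Leffler condition on $k$-points. The claim is nevertheless true here, for ring-theoretic reasons: $\Gamma(Y_n,\sO_{Y_n}^*)^G=(A_n^G)^*$ for the finite-dimensional (hence Artinian) $k$-algebra $A_n^G$; the linear images $C_{m,n}:=\operatorname{im}(A_m^G\to A_n^G)$ stabilise in $m$ for each fixed $n$; and a $k$-algebra homomorphism of commutative Artinian $k$-algebras carries units onto the units of its image (decompose into local factors and note that a lift of a unit avoids every maximal ideal of a surviving factor), so $\operatorname{im}\bigl((A_m^G)^*\to(A_n^G)^*\bigr)=C_{m,n}^*$ stabilises as well. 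With that repair your $\varprojlim^1$-argument closes and the two proofs are equivalent in strength; without it, the crucial vanishing of the kernel is unproved.
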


\begin{proof}
If $\sF$ is an invertible $G$-sheaf on $\hat{X}$, then 
$F_n = \sF \otimes_{\sO_{\hat{X}}}  \sO_{Y_n}$ is an invertible
$G$-sheaf on $Y_n$. This defines a map
$
f: \Pic^G(\hat{X}) \to \varprojlim_{n} ~ \Pic^G(Y_n).
$

An element of $  \varprojlim_{n} ~ \Pic^G(Y_n)$ is given by a 
collection of invertible $G$-sheaves $F_n$ on $\Pic^G(Y_n)$ along with
$G$-equivariant isomorphisms 
$F_{n+1} \otimes_{\sO_{Y_{n+1}}} \sO_{Y_n} \isoto F_n.$
Composing with the natural $G$-equivariant map 
$F_{n+1} \to F_{n+1}  \otimes_{\sO_{Y_{n+1}}} \sO_{Y_n}$,
we get a projective system of invertible $G$-sheaves of 
$\sO_{\hat{X}}$-modules. Then $\sF = \varprojlim_{n} F_n$
is an invertible $G$-sheaf on $\hat{X}$ with
$ \sF \otimes_{\sO_{\hat{X}}} \sO_{Y_n} \simeq F_n$.
Therefore $f$ is surjective. 
To see that $f$ is injective, let $\sF$ be an invertible $G$-sheaf on $\hat{X}$
such that for each $n$, there is a $G$-equivariant isomorphism 
$\sF \otimes_{\sO_{\hat{X}}}  \sO_{Y_n} \isoto \sO_{Y_n}$,
where $\sO_{Y_n}$ has the canonical $G$-action.
By \cite[Proposition II.9.2]{Har13}
and since $(-)^G$ is an additive left exact functor preserving products,
it follows that the functor $\Gamma(Y,-)^G$ preserves inverse limits.
Therefore
$\Gamma(\hat{X}, \sF)^G = \varprojlim_{n} \Gamma(Y_n, F_n)^G$,
where $F_n := \sF \otimes_{\sO_{\hat{X}}}  \sO_{Y_n}$
and the inverse system $\Gamma(Y_n, F_n)^G$ satisfies the Mittag-Leffler
condition \cite[Chapter 0, 13.1.2]{EGA3.1} 
(since $Y_n$ is proper, $\Gamma(Y_n, F_n)^G$ is a finite-dimensional
$k$-vector space). 
By \corref{cor:nw-van-eq-sec}, each $F_n$ has a nowhere vanishing $G$-invariant
section. Therefore the stable images in the inverse system have nowhere vanishing
sections, so we can find a nowhere vanishing section $s \in \Gamma(\hat{X}, \sF)^G$.
Therefore, again by \corref{cor:nw-van-eq-sec}, $\sF \simeq \sO_{\hat{X}}$
is trivial.
\end{proof} 

\begin{thm} \label{thm:Main-thm}
Let $k$ be a field and let $G$ be a finite group.
Let $X$ be a proper non-singular variety over $k$ with $G$-action
and let $Y \subseteq X$ be a $G$-invariant closed subscheme
defined by a $G$-sheaf of ideals $\sI$. Suppose that
\begin{enumerate}
\item $\eLef^G(X,Y)$ holds (see Definition \ref{def:Lef-condition}(2));
\item $Y$ meets every effective divisor on $X$; and
\item $H^i(G; Y, I^n/I^{n+1}) = 0$ for $i = 1, 2$ for all $n \geq 1$.
\end{enumerate}
Then the natural map
$
\Pic^G(X) \rightarrow \Pic^G(Y)
$
is an isomorphism.
\end{thm}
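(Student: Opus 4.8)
The plan is to factor the natural restriction map $\Pic^G(X) \to \Pic^G(Y)$ through the equivariant Picard group of the formal completion $\hat{X}$ of $X$ along $Y$ and through the inverse system of the infinitesimal thickenings $Y_n$, thereby reducing the theorem to a cohomological comparison of $\Pic^G(Y_{n+1})$ with $\Pic^G(Y_n)$. Concretely, hypotheses (1) and (2) are exactly the inputs of \lemref{lem:PicG(X)=PicG(X^)} (we may assume $\dim X \geq 2$, since otherwise condition (2) forces $Y=X$ and the assertion is trivial), so that lemma yields an isomorphism $\Pic^G(X) \isoto \Pic^G(\hat{X})$; and since $X$ is proper, \lemref{lem:PicG(X^)=inv. lim. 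PicG(Yn)} identifies $\Pic^G(\hat{X})$ with $\varprojlim_n \Pic^G(Y_n)$, the transition maps being the restrictions $\Pic^G(Y_{n+1}) \to \Pic^G(Y_n)$. Unwinding the constructions, the composite
\[
\Pic^G(X) \isoto \varprojlim_n \Pic^G(Y_n) \to \Pic^G(Y_1) = \Pic^G(Y),
\]
whose last arrow is projection onto the $n=1$ factor, is precisely the natural restriction, because $\hat{\sL} \otimes_{\sO_{\hat{X}}} \sO_{Y_1} = \sL|_Y$. Thus it suffices to prove that each transition map $\Pic^G(Y_{n+1}) \to \Pic^G(Y_n)$ is an isomorphism, for then the inverse limit maps isomorphically onto $\Pic^G(Y)$.

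For the comparison of consecutive thickenings I would use the equivariant truncated exponential sequence. Since $\sI^n \cdot \sI^n = \sI^{2n} \subseteq \sI^{n+1}$ for $n \geq 1$, the assignment $a \mapsto 1+a$ is a homomorphism from the additive group $\sI^n/\sI^{n+1}$ into the multiplicative group $\sO^*_{Y_{n+1}}$, and because $\sI$ is a $G$-sheaf of ideals this map, together with the restriction $\sO^*_{Y_{n+1}} \to \sO^*_{Y_n}$, assembles into a short exact sequence of $G$-sheaves of abelian groups
\[
0 \to \sI^n/\sI^{n+1} \to \sO^*_{Y_{n+1}} \to \sO^*_{Y_n} \to 0 .
\]
Identifying the equivariant Picard group with the first equivariant cohomology of the multiplicative $G$-sheaf, $\Pic^G(Y_m) = H^1(G; Y_m, \sO^*_{Y_m})$ in the sense of \secref{sec:Group-action}, the associated long exact $G$-cohomology sequence reads
\[
H^1(G; Y, \sI^n/\sI^{n+1}) \to \Pic^G(Y_{n+1}) \to \Pic^G(Y_n) \to H^2(G; Y, \sI^n/\sI^{n+1}),
\]
where I have used that $\sI^n/\sI^{n+1}$ is annihilated by $\sI$ and so is a $G$-sheaf supported on $Y$. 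Hypothesis (3) makes both flanking groups vanish, whence the middle arrow is an isomorphism. Feeding this back into the first paragraph, and using that the inverse limit of a tower of isomorphisms maps isomorphically onto each of its terms, completes the argument.

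The routine parts are the verification that the truncated exponential map is well defined and $G$-equivariant, and the bookkeeping identifying the composite isomorphism with the restriction map. The main obstacle is the cohomological input: one must ensure that the equivariant Picard functor is computed by $H^1(G; -, \sO^*)$, and that a genuine long exact sequence in the equivariant cohomology $H^\bullet(G; -, -)$ is available for a short exact sequence of $G$-sheaves of (not necessarily quasi-coherent) abelian groups. This is where the fact that the category of $G$-sheaves is abelian with enough injectives, recorded earlier, is essential; with it in hand, the above is the equivariant transcription of the comparison of the $\Pic(Y_n)$ carried out in \cite[Chapter IV]{Har70}.
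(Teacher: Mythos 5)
Your proposal is correct and follows essentially the same route as the paper's own proof: factor through $\Pic^G(\hat{X})$ and $\varprojlim_n \Pic^G(Y_n)$ using \lemref{lem:PicG(X)=PicG(X^)} and \lemref{lem:PicG(X^)=inv. lim. PicG(Yn)}, then compare consecutive thickenings via the truncated exponential sequence and the long exact sequence in $G$-cohomology. The one input you flag as needing justification, namely that $\Pic^G(Y_m) = H^1(G; Y_m, \sO^*_{Y_m})$, is exactly what the paper supplies by citing \cite[Theorem 2.7]{HVO15}.
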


\begin{proof} 
The natural map in question factors as
$
\Pic^G(X) \xrightarrow{\alpha} \Pic^G(\hat{X}) \xrightarrow{\beta} \Pic^G(Y),
$
where $\alpha$ and $\beta$ are the natural restriction maps.
The map $\alpha$ is an isomorphism by \lemref{lem:PicG(X)=PicG(X^)}.
Factorise the map $\beta$ further as follows.
For $n \geq 1$, let $Y_n$ denote the $G$-invariant closed subscheme
defined by the sheaf of ideals $\sI^n$. We have natural maps:
$$
\Pic^G(\hat{X}) \to \varprojlim\nolimits_n~\Pic^G(Y_n) \to \cdots \to \Pic^G(Y_{n+1})
 \to \Pic^G(Y_{n}) \to \cdots \to \Pic^G(Y).
$$
We will show that all the above maps are isomorphisms.
The first map is an isomorphism by \lemref{lem:PicG(X^)=inv. lim. PicG(Yn)}.
Let $n \geq 1$ and consider the exact sequence of
$G$-sheaves
$
0 \to I^n/I^{n+1} \to \sO_{Y_{n+1}}^* \to \sO_{Y_{n}}^* \to 0,
$
where $\sO^*$ denotes the multiplicative group of units 
and the first map is given by $x \mapsto 1+x$.
This gives a long exact sequence of $G$-cohomology groups:
\begin{align*}
\cdots \to H^1(G; Y, I^n/I^{n+1}) \to H^1(G; Y_{n+1}, \sO_{Y_{n+1}}^*) 
& \to H^1(G; Y_{n}, \sO_{Y_{n}}^*) \\
& \to H^2 (G; Y, I^n/I^{n+1}) \to \cdots.
\end{align*}
By hypothesis $(3)$, we conclude that 
$H^1(G; Y_{n+1}, \sO_{Y_{n+1}}^*) \isoto H^1(G; Y_{n}, \sO_{Y_{n}}^*)$.
By \cite[Theorem 2.7]{HVO15}, this shows that
$\Pic^G(Y_{n+1}) \to \Pic^G(Y_{n})$ is an isomorphism.
Consequently,
$\varprojlim_{n}~\Pic^G(Y_n)$ is isomorphic to $\Pic^G(Y_n)$ for
every $n \geq 1$.
This completes the proof of the theorem.
\end{proof}

\begin{cor} \label{cor:Pn}
Suppose $G$ acts on $\P_k^n$ and
$Y$ is a $G$-invariant closed subscheme of dimension $\geq 3$
which is a scheme-theoretic complete intersection in $\P_k^n$. Then
the natural map
$
\Pic^G(\P^n_k) \rightarrow \Pic^G(Y)
$
is an isomorphism.
\end{cor}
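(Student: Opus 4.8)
The plan is to deduce the corollary from \thmref{thm:Main-thm} applied to the pair $(\P^n_k, Y)$, so the task reduces to checking the three hypotheses of that theorem. Write $\sI \subseteq \sO_{\P^n_k}$ for the $G$-sheaf of ideals defining $Y$, set $d = \dim(Y) \geq 3$, and let $c$ be the codimension of $Y$, so that $Y = V(f_1, \dots, f_c)$ for a regular sequence of forms of degrees $d_1, \dots, d_c \geq 1$. Note that $\P^n_k$ is a proper non-singular variety over $k$, which is what \thmref{thm:Main-thm} requires of $X$.

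Hypotheses (1) and (2) are quickly disposed of. Since $\P^n_k$ is a non-singular projective variety with $G$-action and $Y$ is a $G$-invariant scheme-theoretic complete intersection with $\dim(Y) \geq 3 \geq 2$, \propref{prop:Leff-cond-complete-int} shows that $\eLef^G(\P^n_k, Y)$ holds, which is (1). For (2), let $D$ be any effective divisor on $\P^n_k$; its support is a hypersurface, so $\P^n_k \setminus D$ is affine. As $Y$ is proper of positive dimension it cannot be a closed subscheme of an affine scheme, so $Y \not\subseteq \P^n_k \setminus D$, i.e. $Y$ meets $D$.

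The substance of the proof is hypothesis (3), the vanishing $H^i(G; Y, \sI^n/\sI^{n+1}) = 0$ for $i = 1, 2$ and all $n \geq 1$. First I would compute the underlying ordinary sheaf cohomology. Because $Y$ is a complete intersection, the immersion $Y \hookrightarrow \P^n_k$ is regular, so $\bigoplus_n \sI^n/\sI^{n+1} \cong \Sym^\bullet(\sI/\sI^2)$ with conormal sheaf $\sI/\sI^2 \cong \bigoplus_{i=1}^c \sO_Y(-d_i)$; hence $\sI^n/\sI^{n+1} \cong \Sym^n(\sI/\sI^2)$ is, as an $\sO_Y$-module, a finite direct sum of line bundles $\sO_Y(-m)$ with $m \geq n \geq 1$. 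Complete intersections are arithmetically Cohen--Macaulay, so the intermediate cohomology $H^q(Y, \sO_Y(m))$ vanishes for $0 < q < d$ and all $m$, while projective normality gives $H^0(Y, \sO_Y(-m)) = 0$ for $m \geq 1$. As $d \geq 3$ this yields $H^q(Y, \sO_Y(-m)) = 0$ for $q \in \{0,1,2\}$ and $m \geq 1$, hence $H^q(Y, \sI^n/\sI^{n+1}) = 0$ for $q = 0,1,2$ and all $n \geq 1$. To pass to $G$-cohomology I would invoke the Grothendieck spectral sequence for the composite $\Gamma(Y,-)^G = (-)^G \circ \Gamma(Y,-)$, namely $E_2^{p,q} = H^p(G, H^q(Y, \sF)) \Rightarrow H^{p+q}(G; Y, \sF)$ with $\sF = \sI^n/\sI^{n+1}$. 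The vanishing above kills every $E_2^{p,q}$ with $q \in \{0,1,2\}$, and the abutment in total degree $i \leq 2$ is assembled only from such terms, so $H^1(G; Y, \sI^n/\sI^{n+1}) = H^2(G; Y, \sI^n/\sI^{n+1}) = 0$. This verifies (3), and \thmref{thm:Main-thm} then delivers the isomorphism $\Pic^G(\P^n_k) \isoto \Pic^G(Y)$.

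I expect the main obstacle to be the final transition to $G$-cohomology. When $\Char(k)$ divides $|G|$ the invariants functor $(-)^G$ is not exact, so one genuinely needs the composite-functor spectral sequence rather than the naive identity $H^q(G; Y, -) = H^q(Y, -)^G$; in particular one must confirm that $\Gamma(Y, -)$ carries injective $G$-sheaves to $G$-acyclic modules so that the $E_2$-page takes the stated form. The sheaf-cohomology input, by contrast, is routine once the decomposition of $\sI^n/\sI^{n+1}$ into strictly negative twists and the arithmetic Cohen--Macaulay property of complete intersections are in hand.
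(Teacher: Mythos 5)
Your proposal is correct and follows essentially the same route as the paper: verify $\eLef^G(\P^n_k,Y)$ via \propref{prop:Leff-cond-complete-int}, check that $Y$ meets every effective divisor, decompose $\sI^n/\sI^{n+1}$ into negative twists $\sO_Y(-m)$, kill $H^q(Y,\sO_Y(-m))$ for $q\le 2$ using the complete-intersection property, and pass to $G$-cohomology via the spectral sequence $H^p(G,H^q(Y,-))\Rightarrow H^{p+q}(G;Y,-)$, which is exactly what the paper's citation of \cite[(2.5)]{HVO15} supplies. The only cosmetic differences are that you argue hypothesis (2) directly (complement of a divisor is affine) where the paper cites \cite{Har70}, and you phrase the cohomology vanishing via the arithmetically Cohen--Macaulay property where the paper cites \cite[Proposition 5]{FAC}.
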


\begin{proof}
Since $Y$ is a $G$-invariant scheme-theoretic complete intersection and $\dim(Y) \geq 3$,
$\eLef^G(X,Y)$ holds by \propref{prop:Leff-cond-complete-int} and
$Y$ meets every effective divisor on $\P^n_k$ 
by \cite[Theorem III.5.1, Proposition IV.1.1]{Har70}. 
Further if $Y$ is an intersection
of hypersurfaces of degree $d_1, \cdots, d_r$ then 
$I/I^2 \simeq \sO_Y(-d_1) \oplus \cdots \oplus \sO_Y(-d_r)$. Hence
for all $n \geq 1$,
$I^n/I^{n+1}$ is a direct sum of sheaves of the form $\sO_Y(m)$
for suitable integers $m < 0$. By \cite[Proposition 5]{FAC}, $H^i(Y, \sO_Y(m)) = 0$
for all $0 \leq i < \dim(Y)$ for $m < 0$. Since $\dim(Y) \geq 3$,
$H^i(Y, I^n/I^{n+1}) = 0$ for $0 \leq i \leq 2$. Therefore by 
\cite[(2.5)]{HVO15}, $H^i(G; Y, I^n/I^{n+1}) = 0$ for $i = 1, 2$.
This shows that the hypotheses of \thmref{thm:Main-thm} are satisfied. 
\end{proof}

\begin{proof} [Proof of Theorem \ref{cor:char0}]
It is enough to check as in the above corollary that $H^i(Y, \sO_Y(m)) = 0$
for $0 \leq i \leq 2$ and all $m < 0$. This follows from 
the Kodaira-Akizuki-Nakano vanishing theorem (see \cite[Corollary 2.11]{DI87}) 
as $\dim(Y) \geq 3$.
\end{proof}

\section*{}


\begin{thebibliography}{99}
\bibitem{DI87} P. Deligne, L. Illusie, 
{\sl Rel\`evements modulo $p^2$ et d\'ecomposition du complexe de de Rham\/}, 
Invent. Math. {\bf 89}, (1987), no.~2, 247--270.\

\bibitem{EG98} D. Edidin, W. Graham, 
{\sl Equivariant intersection theory\/}, 
Invent. Math. {\bf 131}, (1998), no.~3, 595--634.\

\bibitem{EGA3.1} A. Grothendieck, 
{\sl \'El\'ements de g\'eom\'etrie alg\'ebrique. III. \'Etude cohomologique des faisceaux 
coh\'erents. I \/}, 
Inst. Hautes \'Etudes Sci. Publ. Math. No. 11, (1961), 167 pp. \

\bibitem{SGA2} A. Grothendieck, 
{\sl Cohomologie locale des faisceaux coh\'erents et th\'eor\`emes de Lefschetz locaux et 
globaux $(SGA$ $2)$\/}, 
North-Holland Publishing Co., Amsterdam, (1968).\

\bibitem{Har13} R. Hartshorne, 
{\sl Algebraic Geometry\/}, 
Graduate text in Math., {\bf 52}, Springer, (1997). \

\bibitem{Har70} R. Hartshorne, 
{\sl Ample subvarieties of algebraic varieties\/}, 
Lecture Notes in Mathematics, Vol. 156, Springer-Verlag, Berlin, (1970).\

\bibitem{HVO15} J. Heller, M. Voineagu, P. A. {\O}stv{\ae}r, 
{\sl Equivariant cycles and cancellation for motivic cohomology\/}, 
Doc. Math. {\bf 20}, (2015), 269--332.\

\bibitem{H17} M. Hoyois, 
{\sl The six operations in equivariant motivic homotopy theory\/}, 
Adv. Math. {\bf 305}, (2017), 197--279. \

\bibitem{KR16} A. Krishna, C. Ravi, 
{\sl Equivariant vector bundles, their derived category and $K$-theory on affine schemes\/}, 
Ann. K-Theory {\bf 2}, (2017), no.~2, 235--275.\

\bibitem{GIT} D. Mumford, J. Fogarty, F. Kirwan, 
{\sl Geometric Invariant Theory \/}, 
Third Edition, Results of Mathematics and its Boundaries (2), 34, Springer-Verlag, Berlin, (1994).\

\bibitem{FAC} J.-P. Serre, 
{\sl Faisceaux alg\'ebriques coh\'erents\/}, Ann. of Math. (2) {\bf 61}, (1955), 197--278.\ 



\end{thebibliography}
\end{document}